\newtheorem{theorem}{\bf Theorem}[section]
\newtheorem{proposition}{\bf Proposition}[section]
\newtheorem{corollary}{\bf Corollary}[section]
\newtheorem{remark}{\bf Remark}[section]
\newcommand{\bmat}{\left[ \begin{matrix}}
\newcommand{\emat}{\end{matrix} \right]}
\newcommand{\N}{\mathbb  N}
\newcommand{\R}{\mathbb  R}
\newcommand{\bC}{\mathbf C}
\newcommand{\bSigma}{\boldsymbol{\Sigma}}
\newcommand{\cK}{\mathcal K}
\newcommand{\cI}{\mathcal I}
\newcommand{\cN}{\mathcal N}
\newcommand{\cS}{\mathcal S}
\newcommand{\Ebb}{{\mathbb E}\,}
\begin{document}

\date{} 
\title{\LARGE \bf On the Maximum Entropy Property of the First--Order Stable Spline Kernel and its Implications} %/Consequences}
%\title{An Efficient Algorithm for Maximum-Entropy Extension  of Block--Circulant Covariance Matrices}

%\author{Francesca Carli, Augusto Ferrante, Michele Pavon, and Giorgio Picci\thanks{Work partially supported by the Italian Ministry for Education and Resarch (MIUR) under PRIN grant ``Identification and Adaptive Control of Industrial Systems".}
%\thanks{F. Carli, A. Ferrante and G. Picci are  with the 
%Department of Information Engineering (DEI), University of Padova,
%via Gradenigo 6/B, 35131
%Padova, Italy. {\tt\small  carlifra@dei.unipd.it}, {\tt\small  augusto@dei.unipd.it},  {\tt\small  picci@dei.unipd.it}}
%\thanks{M. Pavon is with   the
%Department of Pure and Applied Mathematics, University of Padova,  
%{\tt\small pavon@math.unipd.it}}}

\newcommand{\footremember}[2]{%
   \footnote{#2}
    \newcounter{#1}
    \setcounter{#1}{\value{footnote}}%
}
\newcommand{\footrecall}[1]{%
    \footnotemark[\value{#1}]%
} 
%\title{An Efficient Algorithm for Maximum-Entropy Extension  of Block--Circulant Covariance Matrices}
\author{%
    Francesca P. Carli \footremember{CL}{Department of Electrical Engineering and Computer Science, 
		University of Li\`{e}ge, Belgium and Department of Engineering, University of Cambridge, United Kingdom.}
    \\ {\tt\small  fpc23@cam.ac.uk}.}%

\maketitle

%%%%%%%%%%%%%%%%%%%%%%%%%%%%%%%%%%%%%%%%%%%%%%%%%%%%%%%%%%%%%%%%%%%%%%%%%%%%%%%%
\begin{abstract} 
A new nonparametric approach for system identification has been recently proposed where the impulse response is seen as the realization of a zero--mean Gaussian process whose covariance, 
the so--called stable spline kernel, guarantees
that the %estimated 
impulse response is almost surely stable. 
Maximum entropy properties of the stable spline kernel have been pointed out in the literature. 
In this paper we provide an independent proof that relies on the theory of matrix extension problems 
in the graphical model literature 
and leads to a closed form expression for the inverse of the first order stable spline kernel as well as to a 
new factorization %of kernels belonging to this class in
in the form $UWU^\top$ with $U$ upper triangular and $W$ diagonal. 
Interestingly, all first--order stable spline kernels share the same factor $U$ and %the elements of 
$W$ admits a closed form representation in terms of the kernel hyperparameter, making the factorization computationally inexpensive.  
Maximum likelihood properties of the stable spline kernel are also highlighted. 
%These results can %for example 
%be applied 
%%to relieve the computational burden associated with the computation of 
%%stable spline estimators. 
%both to improve stability and to relieve the computational burden associated with the computation of 
%stable spline estimators. 
These results can be applied both to improve the stability and to reduce the computational complexity associated with the computation of stable spline estimators.

%\noindent - Dimostrazione che TC kernel e' kernel a massima entropia e quindi ha inversa a banda
%\\- provvedo fattorizzazione dell'inversa (mai visto prima) e da questa derivo 
%\\- espressione in forma chiusa dell'inversa (mai visto prima)
%\\- grazie alla fattorizzazione per calcolo efficiente stima r.i. 
%\\- nota anche che dire massima entropia = dire max likelihood (pillo NON aveva questa osservazione)
%
\end{abstract}

%%%%%%%%%%%%%%%%%%%%%%%%%%%%%%%%%%%%%%%%%%%%%%%%%%%%%%%%%%%%%%%%%%%%%%%%%%%%%%%%
\section{Introduction}\label{sec:Introduction}

%In this paper we provide a simple algebraic proof of the ame statement. 
%This leads to uncover additional properties of the ssk. 

Most of the currently used techniques for linear system identification 
relies on parametric prediction error methods (PEMs), \citep{Ljung1999, Soderstrom1989}. 
Here, finite--dimensional hypothesis spaces of different order,
such as ARX, ARMAX or Laguerre models, are first postulated. 
Then, the most adequate model order %has to be 
is selected trading--off between bias and variance to avoid overfitting. 
Model--order selection is usually performed by optimizing  
%to complexity criteria suchas AIC/
some penalized goodness--of--fit criteria, such as the Akaike information criterion (AIC) \citep{Akaike1974} or the Bayesian information criterion (BIC) \citep{Schwarz1978}, 
or via cross validation (CV) \citep{HastieTibshirani2008}. 
%However, within this parametric
%paradigm, a key point is the selection of the most adequate model
%structure. In the ‘‘classical, frequentist’’ framework, this is a question
%of trade-off between bias and variance,
%
%Then, model-order selection is performed by resorting 
%%to complexity criteria suchas AIC/
%on the optimization of some penalized goodness-of-fit criteria such as the Akaike's criterion (AIC) 
%or to cross validation \citepp{Akaike1974, HastieTibshirani2008}. 
Statistical properties of prediction error methods are well understood under the assumption that the model class is fixed. 
Nevertheless, %\textcolor{blue}{when tested on experimental data, (TOGLI??)} 
sample properties of PEM approaches equipped e.g. with AIC or CV can much depart from those predicted by standard (i.e. without model selection) statistical theory 
%(\citep{PillonettoDeNicolao2010,PillonettoChiusoDeNicolao2011,ChenOhlssonLjung2012}). 
(\citep{PillonettoDeNicolao2010,PillonettoChiusoDeNicolao2011}). 
%, which suggests that PEM should be asymptotically efficient for Gaussian innovations. 

Motivated by these pitfalls, % of parametric approaches, 
%recent contributions have proposed a different approach where 
a new approach to system identification has been recently proposed where 
the system impulse response is seen as the realization of a zero--mean Gaussian process 
with a suitable covariance that depends on few hyperparameters, 
%\footnote{According to the statistical literature, the term hyperparameter is used
%to denote the parameters which describe the prior distribution.}, %These hyperparameters are 
learnt from data via, e.g., marginal likelihood maximization. 
%This procedure introduces a suitable amount of regularization in the estimation process 
%and is interpretable as the counterpart of model order selection in the
%parametric paradigm %and in many cases it has been proved to be more robust than AIC-type criteria and CV. 
%% by introducing a suitable amount of regularization in the estimation process. 
%having been proved to be more robust than AIC-type criteria and CV. 
This procedure %is interpretable 
can be seen as the counterpart of model order selection in the
parametric paradigm and in many cases it has been proved to be more robust than AIC-type criteria and CV. % \textcolor{blue}{(RIFERIMENTO???)}. 

In this scheme, quality of the estimates crucially depends on the covariance (kernel) of the Gaussian process. 
%In this scheme, a key role is played by the covariance (kernel) of the Gaussian process
%Its choice has a crucial impact on the ability of predicting future output data (generalization performance).
%whose choice has a crucial impact on generalization performance. 
A large variety of positive semidefinite kernels have been introduced in the machine learning literature 
\citep{ShaweTaylorCristianini2004,ScholkopfSmola2001}. % over the years. 
Nevertheless, a straight application of standard machine learning kernels in the framework of system identification %scenario 
is doomed to fail %including only smoothness information   field. 
mainly because of the lack of constraints on system stability. 
For this reason, several kernels have been recently introduced in the system identification literature \citep{PillonettoDeNicolao2010,ChenOhlssonGoodwinLjung2011}. 
%It has been shown that optimal kernel matrices are such that their diagonal and off diagonals decay exponentially to zero. 

This paper deals with \emph{stable spline kernels}.  
%The class of stable spline kernels were introduced in \citep{PillonettoDeNicolao2010} 
Stable spline kernels were introduced in \citep{PillonettoDeNicolao2010}  
%and are
%these kernels are an adaptation of spline kernels 
%As suggested by the name, these kernels originates from an adaptation of spline kernels 
%through an exponential change of coordinates that enforces the associated process realizations to be asymptotically stable.  
%as suggested by the name, 
%These kernels 
%
%are an adaptation of spline kernels that,  % in machine learning that, 
%through an exponential change of coordinates, enforces the associated process realizations to be asymptotically stable.  
%
%By construction, they also inherit all the approximation capabilities of spline curves. 
%leads to an estimator with favorable bias and variance
% properties. 
as
an adaptation of spline kernels that %, via a suitable change of coordinates, 
enforces the associated process realizations to be asymptotically stable.  
Some theoretical results that assess robustness of this class of kernels 
%, by finding connections with MSE minimization, 
are described in \citep{AravkinBurkeChiusoPillonetto2014,CarliChenChiusoLjungPillonetto2012}. 
Efficient numerical implementations %of this minimization problem 
are discussed in \citep{CarliChiusoPillonetto2012SYSID, ChenLjung2013}. 
In this paper we concentrate on \emph{first--order stable spline kernels} 
(see \citep{PillonettoChiusoDeNicolao2010_ACC} and also \citep{ChenOhlssonLjung2012}, 
where %they have 
this class of kernels has also been introduced by using a totally different, deterministic argument).  
Maximum entropy properties of first--order stable spline kernels have been pointed out in \citep{PillonettoDeNicolao2011}. 
%In this paper we provide an independent proof of this property by exploiting 
%
%In this paper an alternative proof of the MaxEnt property is provided by resorting to an independent, algebraic argument 
%that connects to the theory of matrix completion problems in graphical models. 
%
In this paper, we provide an alternative proof of the maximum entropy property by resorting to an independent, algebraic argument 
that connects to the theory of matrix completion and, in particular, of band extension problems in the graphical models literature 
\citep{Dempster1972,GroneJohnsonSaWolkowicz1984,DymGohberg1981,GohbergGoldbergKaashoek1993,DahlVanderbergheRoychowdhury2008}. 
%
%This alternative approach leads us to uncover additional property of the stable spline kernel. 
%In particular we provide 
%In articular 
%
This alternative approach leads to a %leads us to provide
closed form expression for the inverse of the first--order stable spline kernel. 
%The first--order stable spline kernel can be expressed in factored form. 
A factorization of the first--order stable spline kernel in the form $UWU^\top$ with $U$ upper triangular and $W$ diagonal is also provided. Interestingly, all first--order stable spline kernels share the same factor $U$ and %the elements of 
$W$ admits a closed form representation in terms of the kernel hyperparameter, 
making the factorization inexpensive from a computational point of view.  
%An $LDL^\top$ is also provided. Interestingly, all stable spline kernels of  order one share the same $L$by the same triangular matrix. 
Moreover it can be proved that 
%if the impulse response is such that its empirical first two covariance lags are 
%consistent with those of a stable spline kernel, then 
the first--order stable spline kernel 
maximizes the likelihood among all covariances that satisfy certain conditional independence constraints.  
%The above mentioned properties can for example be used to relieve the computational burden associated with the computation of the stable spline estimator. 
The above mentioned properties can for example be used both to improve stability and reduce the computational burden  
%associated with 
of computational schemes for the evaluation of the stable spline estimator. 
%of algorithms for the computation of the stable spline estimator.
%by construction, inherit all the approximation capabilities
%of the spline curves (Atkinson, 1968), but, different from them, are
%intrinsically stable
%13.5. Stable spline kernels and their relationship with TC and SS
%We now discuss a class of exponentially stable kernels.
%We start noticing that, according to Proposition 19, the optimal
%kernel is such that its diagonal K(t, t), as well as its off diagonals,
%decay exponentially to zero. The key idea developed in Pillonetto
%and De Nicolao (2010) to build a stable kernel which has these
%two features is an exponential change of coordinates to remap
%R+ into [0, 1], then using a spline kernel for functions defined
%there. This leads to the class of so called stable spline kernels
%which, by construction, inherit all the approximation capabilities
%of the spline curves (Atkinson, 1968), but, different from them, are
%intrinsically stable. This class has been also derived using Bayesian
%and maximum entropy arguments: in some sense it represents the
%least committing priors when smoothness and stability is the sole
%information on g (Pillonetto & De Nicolao, 2011
 
The paper is organized as follows. 
In Section \ref{sec:LinearSYSDviaGP} the problem is introduced and Gaussian process regression via first order stable--spline kernels is briefly reviewed. 
In Section \ref{sec:MaxEntrBandExt} relevant theory of matrix completion problems is introduced. 
Section \ref{sec:FirstOdertSSk_as_MaxEntrkernel} contains our main results. 
%These result are applyied in Section \ref{sec:comp_complexity_SSE} to relieve the computational burden associated with the computation of the stable spline kernel. 
Section \ref{sec:Conclusions} ends the paper.  

\vspace{4mm}
\noindent \textbf{Notation.} 
Let $\cS_n$ denote the vector space of symmetric matrices of order $n$. 
We write $A \succeq 0$ (resp. $A \succ 0$) to denote that $A$ is positive semidefinite (resp. positive definite). 
Moreover, we denote by $I_k$ the identity matrix of order $k$, 
by $\mathds{1}_k$ the $k$--dimensional vector of all ones,  
and by $0_k$ the $k$--dimensional vector of all zeroes. 
The diagonal matrix of order $k$  with diagonal elements $\left\{a_1, a_2, \dots, a_k\right\}$ 
will be denoted by ${\rm diag}\left\{a_1, a_2, \dots, a_k\right\}$. 
If $A$ is a square matrix of order $n$, for index sets $\beta \subseteq \left\{1, \dots, n\right\}$ and $\gamma \subseteq \left\{1, \dots, n\right\}$,  
we denote the submatrix that lies in the rows of $A$ indexed by $\beta$ and 
the columns indexed by $\gamma$ as $A(\beta,\gamma)$. 
If $\gamma=\beta$, the submatrix $A(\beta, \gamma)$ is abbreviated $A(\beta)$.

\section{Linear system identification via Gaussian Process Regression}\label{sec:LinearSYSDviaGP}
%2. STATEMENT OF THE PROBLEM AND NOTATION
%3. GAUSSIAN REGRESSION VIA STABLE SPLINE KERNELS %AND CONNECTION WITH RKHS THEORY

\subsection{Statement of the problem}\label{subsec:ProblemStatement}
%For $t \in \Z$, we use 
We consider the measurement model 
%Let The measurements model is
\begin{equation}\label{eqn:lin_mod}
 y_t = \sum_{k=1}^{\infty} f_k u_{t-k} + e_{t} %\\
%\nonumber && \{q_k\}, \{w_k\} \in \mathcal{B}
\end{equation}
where $\{y_t\}$ denote the noisy output samples of a discrete--time linear dynamical system 
fed with a known input $\{u_t\}$. 
$f=\{f_t\}_{t=1}^{\infty}$ is the unknown impulse response and $\{e_t\}$ is white Gaussian noise with 
variance $\sigma^2$.  
Suppose that $N$ measurements are available. % at positive sampling instants. 
We can collect these measurements in the $N$--dimensional column vector $y=[y_1,\, \ldots, \, y_N]^\top$. 
Let $e$ denote $N$--dimensional vector of the noise samples $e=[e_1 \quad \ldots e_N]^\top$. 
Thinking of $f$ as an infinite--dimensional column vector, and using notation of ordinary algebra to handle infinite--dimensional objects, model \eqref{eqn:lin_mod} can be expressed in matrix form as %it holds that
\begin{equation}\label{Uf}
y= Gf + e
\end{equation}
where $G \in \R^{N \times \infty}$ is a matrix whose entries are defined by the system input, 
so that $Gf$ represents  the convolution between the system impulse response and the input.  
%Our problem is to estimate 
We consider the problem of estimating $f$ from $y$.
%We assume that 
%$n$ measurements are available at positive
%sampling instants contained in the vector $y=[y_1,\, \ldots, \, y_n]^\top$.
%Thinking of $f$ as an infinite-dimensional column vector, and using
%notation of ordinary algebra to handle infinite-dimensional objects,
%it holds that
%\begin{equation}\label{Uf}
%y= Uf + v
%\end{equation}
%where $v=[v_1 \quad \ldots v_n]^\top$ while $U \in \R^{n \times
%\infty}$ is a matrix whose entries are defined by the system
%input.
%Our problem is to estimate $f$ from $y$.

\subsection{Gaussian process regression via Stable Spline Kernels}\label{subsec:GPregression_via_SSkernel}

In the classical system identification set up, the impulse response 
%is parameterized in a finite dimensional domain. 
is searched for within a finite--dimensional space, e.g. postulating ARX, ARMAX or Laguerre models. 
%Rather than postulating finite-dimensional hypothesis spaces, e.g. using ARX, ARMAX or Laguerre models, 
%the new paradigm formulates the problem as function estimation possibly in an infinite-dimensional space. 
%In the context of linear system identification, the elements of such space are all possible impulse responses. 
Under the framework of Gaussian process regression \citep{RasmussenWilliams2006}, %\citep{RasmussenWilliams2006,PillonettoDeNicolao2010}
%developed in the papers \citepp{DNPAut2010,DNPAut2011}, 
$f$ is instead modeled as a sampled version of a continuous--time zero--mean Gaussian process
with a suitable covariance (kernel), independent of $e$. 
% enforcing stability (i.e. finite $\ell_1$ norm).
%In particular, for $t$ and $\tau$ in $\R^+$, the stable spline kernel is given by
%\begin{equation}\label{SS}
%K(t,\tau) := \frac{e^{-\beta (t+\tau)}
%e^{-\beta \max(t,\tau)}}{2}-\frac{e^{-3\beta \max(t,\tau)}}{6}
%\end{equation}
%where $\beta \in \R^+$ is an unknown hyperparameter
%describing the decay rate of the variance of $f$.
%With a slight abuse of notation, %hereafter 
We denote with $K$ the infinite--dimensional matrix obtained by sampling $K(\cdot,\cdot)$ on $\N \times \N$ 
%so that %we can write %we have 
and write 
\begin{equation}\label{modf}
f \sim \cN(0, K(\eta)), \qquad f \perp e
\end{equation}
where $\eta$ is a vector of hyperparameters governing the prior covariance. 
%with $\lambda \in \R^+$. % is a scale factor that also needs to be determined from data.\\
According to an Empirical Bayes paradigm \citep{Berger1985, MaritzLwin1989}, 
the hyperparameters can be estimated from the data via marginal likelihood maximization, i.e. 
by maximizing the marginalization with respect to $f$ of the joint density of $y$ and $f$ 
%This is equivalent to minimizing 
\begin{align}\label{eqn:J_N}
\hat \eta %&:= -2\log p\left(y | \, \alpha\right) \nonumber \\
&= \arg\min_{\eta} \;\left\{\log \det \Sigma_y(\eta) +  y^\top \Sigma_y(\eta)^{-1} y\right\} 
\end{align}
with
\begin{equation}\label{eqn:Sigmay}
\Sigma_y(\eta) = G K(\eta) G^\top + \sigma^2 I_N \,. 
\end{equation}
%with $I_N$ the $N \times N$ identity matrix.
%Once $\hat{\alpha} := \arg \min J_N(\alpha)$ is computed, the unknown coefficients can be obtained according to an Empirical Bayes approach by setting $\hat f$ to its minimun variance estimate given $y$ and $\alpha=\hat \alpha$, i.e %by setting
Once $\eta$ is estimated, the impulse response can be computed as the minimum variance estimate 
given $y$ and $\hat{\eta}$, i.e. 
%and $\hat f$ is set to its minimun variance estimate given $y$ and $\hat a$ %, i.e %by setting
\begin{equation}\label{eqn:f}
\hat f := \Ebb \left[f | y, \hat \eta \right]  
= K(\hat \eta) G^\top \left(G K(\hat \eta) G^\top + \sigma^2 I_N\right)^{-1} y \,.
\end{equation}
Prior information is introduced in the identification process by assigning the covariance $K(\eta)$.  
%named also kernel in the machine learning literature
The quality of the estimates %obtained by using this scheme 
crucially depends on this choice %of the prior covariance 
%PCDN pag 4: on the kernel chosen to model the autocovariance of f .
%the choice of the prior covariance $K(\alpha)$ 
as well as on the quality of the estimated $\hat \eta$.  

A class of prior covariances which has been proved to be very effective in the system identification scenario, 
%specifically suited to linear system identification which leads to an estimator with favorable bias and variance
% properties. 
is the class of stable spline kernels (\citep{PillonettoDeNicolao2010,PillonettoChiusoDeNicolao2010_ACC,PillonettoChiusoDeNicolao2011}), that, besides incorporating information on smoothness, 
%This class 
guarantees that the estimated impulse response is almost surely stable.

First--order stable spline kernels (equivalently, stable spline kernels of order $1$) 
were introduced in \citep{PillonettoChiusoDeNicolao2010_ACC} 
(see also \citep{ChenOhlssonLjung2012}, where they are referred to as Tuned/Correlated (TC) kernels)  
% where the same kernel has also been introduced by using a totally different deterministic argument). 
and are defined as 
%The same kernel was introduced in Chen, Ohlsson, Goodwin, and Ljung (2011), 
%Pillonetto, Chiuso, and De Nicolao (2010) and Pillonetto and De Nicolao (2011)
%with the name First-order Stable Spline. 
$$
\cK_{ij} = \lambda \alpha^{\max(i,j)}, \qquad \lambda\geq 0,\; 0 \leq \alpha < 1, 
$$
so that $\eta = [\lambda, \alpha]$. 

\section{Maximum Entropy band extension problem}\label{sec:MaxEntrBandExt}

%First--order Stable Spline kernel enjoys favorable Maximum Entropy properties.  
Covariance extension problems were introduced by A.~P. Dempster \citep{Dempster1972} and studied by many authors (see e.g. 
\citep{GroneJohnsonSaWolkowicz1984,DymGohberg1981,Johnson1990,GohbergGoldbergKaashoek1993,DahlVanderbergheRoychowdhury2008} and references therein, 
see also \citep{CFPP-2011,CG-2011,CFPP-2013} for an extension to the circulant case). 
In the literature concerning matrix completion problems,     
it is common practice to describe the pattern of the specified entries of an $n\times n$ partial symmetric matrix %$M=(m_{ij})$ 
by an undirected graph 
of $n$ vertices which has an edge joining vertex $i$ and vertex $j$ if and only if the $(i,j)$ entry %$m_{ij}$ 
is specified. 
%Since the diagonal entries are all assumed to be specified, we ignore loops at the vertices.
If the graph of the specified entries is \emph{chordal} (i.e., a graph in which every cycle of length greater than three has an edge connecting nonconsecutive nodes, see e.g. \citep{Golumbic-80}), and, in particular, if the specified elements lie on a band centered along the main diagonal, then 
the maximum entropy covariance extension problem admits a closed form solution in terms of the principal minors of the matrix to be completed %sample covariance matrix 
(see \citep{BJL-89}, \citep{FKMN-00}, \citep{NFFKM-03}).  
%\textcolor{blue}{Di solito si associa grafo a entries specificate/non specificate (solo se il problema e' di massima entropia/ML? no, secondo anche se cerco soo estensione positiva). 
%E' ben noto che se il grafo e' cordale soluzione in forma chiusa (citazioni). 
%Un caso particoalre di grafo cordale e' quando elementi specificati stanno su bande contigue. (e fin qui ho spiegao il ``band'' che viene dopo, ma c'è un altro salto logico, infatti parlo sotto di ``maximum entropy'' extension). }
In this section, we briefly review some fundamental results %of
about \emph{maximum entropy band extension problems} that 
will be used %in the next section 
to prove our main results in Section \ref{sec:FirstOdertSSk_as_MaxEntrkernel}. 

Recall that the {\em differential entropy} $H(p)$ of a probability density function $p$ on $\R^n$ is defined by
\begin{equation}\label{DiffEntropy}
H(p)=-\int_{\R^n}\log (p(x))p(x)dx.
\end{equation}
In  case of a zero--mean Gaussian distribution $p$ with covariance matrix $\bSigma_n$, we get
\begin{equation}\label{gaussianentropy}
H(p)=\frac{1}{2}\log(\det\bSigma_n)+\frac{1}{2}n\left(1+\log(2\pi)\right).
\end{equation}
%Let $\cS_n$ denote the vector space of {\em symmetric} matrices of dimension $n\times n$. 
Let $\cI \subset \left\{1,\dots ,n\right\}\times \left\{1,\dots,n\right\}$ denote a set of indices  
and $\bar{\cI}$ the complement of $\cI$ with respect to $\left\{1,\dots ,n\right\}\times \left\{1,\dots,n\right\}$. 
Let $x$ be the vector, say $k$--dimensional, obtained by stacking the $x_{ij}$'s one on top of the other. 
%We define a 
A \emph{partial matrix} is a parametric family of $n \times n$ matrices $\bSigma_n(x)$ 
with entries $[\bSigma_n(x)]_{i,j}=\sigma_{ij}$, $(i,j) \in \cI$ specified, 
and entries $[\bSigma_n(x)]_{i,j} = x_{ij}$, for $(i,j) \in \bar{\cI}$, which are left unspecified. %) , i.e. 
Here, both $\sigma_{ij}$ and $x_{ij}$ are taken to be real.
A \emph{completion (extension)} of $\bSigma_n(x)$ is a $n \times n$ matrix $[C]_{i,j} = c_{ij}$ which satisfies 
$$c_{ij} = \sigma_{ij}\quad  \forall (i,j) \in \cI\,.$$  
{In particular, let } %Let 
$$
\cI^{(m)}_b:= \left\{(i,j)\, \mid \, |i-j|\leq m \right\}\,. 
$$
If $\cI \equiv \cI^{(m)}_b$, we refer to $\bSigma_n^{(m)}(x)$ as a {\emph{partially specified $m$--band matrix}.  

Consider the following optimization problem 
\begin{subequations}\label{probl:MaxEntr}
\begin{eqnarray}
%\begin{equation}\label{probl:MaxEntr}
%\begin{array}{ll}
\underset{}{{\rm minimize}} & \left\{-\log \det \bSigma_n^{(m)}(x) \mid \bSigma_n^{(m)}(x) \in \cS_n\right\}\\ 
\text{subject to } & \bSigma_n^{(m)}(x) \succeq 0\\
& e_i^\top \, \bSigma_n^{(m)}(x)\, e_j = \sigma_{ij}, \quad (i,j)\in \cI^{(m)}_b 
%{\rm diag}(K) = \bmat \alpha & \alpha^2 & \alpha^3 &\alpha^4 \emat \\
%& % {\rm diag}(K,1) = \bmat  \alpha^2 & \alpha^3 &\alpha^4 \emat 
%\end{array}
%\end{equation}
\end{eqnarray}
\end{subequations}
with optimization variable $x$, namely the problem of computing the maximum entropy extension  
of the partially specified symmetric $m$--band matrix $\bSigma_n^{(m)}(x)$.     
Problem \eqref{probl:MaxEntr} is a convex optimization problem. 
Denote by $x^o$ its optimal value and by $\bSigma_n^{(m),o}\equiv\bSigma_n^{(m)}(x^o)$ the associated extension.  %$\left(\bSigma_n^{(m)}\right)^o = \bSigma_n^{(m)}(x^o)$  % of Problem \eqref{probl:MaxEntr}. 
Moreover from now on, we will drop the dependence on $x$ in $\bSigma_n^{(m)}(x)$ and refer to  
a $m$-band partially specified $n \times n$ matrix as $\bSigma_n^{(m)}$.  

\begin{theorem}[\citep{Dempster1972, DymGohberg1981}] %Consider Problem \eqref{probl:MaxEntr}. The following holds:
\label{thm:feasibility_and_bandedness}
\begin{itemize}
	\item[$(i)$] \emph{Feasibility:} Problem \eqref{probl:MaxEntr} is feasible, 
	namely $\bSigma_n^{(m)}$ admits a positive definite extension if and only if 
	\begin{equation}\label{eqn:CNS_A_admits_posdef_ext}
	\begin{bmatrix}
	\sigma_{i,i} & \cdots & \sigma_{i,m+i} \\
	\vdots & & \vdots \\
	\sigma_{m+i,i} & \cdots & \sigma_{m+i,m+i}
	\end{bmatrix}
	\succ 0, 
	\;\,\,
	i=1, \ldots, n-m
	\end{equation}
	\item[$(ii)$] \emph{Bandedness: } 
	Assume \eqref{eqn:CNS_A_admits_posdef_ext} holds. 
	Then 	\eqref{probl:MaxEntr} admits a unique solution with the additional property that its inverse is banded 
	of bandwidth $m$, namely the $(i,j)$--th entry of $\left(\bSigma_n^{(m),o}\right)^{-1}$ 
	is zeros if $|i-j|>m$. 
\end{itemize}
\end{theorem}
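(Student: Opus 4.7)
The plan is to treat the two parts separately, since they rely on rather different ideas: (i) is a pure linear algebra statement about positive definite completions, whereas (ii) is a straightforward convex-duality computation.

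For the necessity direction of (i) I would simply observe that any principal submatrix of a positive definite matrix is positive definite, and that for each $i=1,\ldots,n-m$ the index set $\{i,i+1,\ldots,i+m\}$ is such that the associated submatrix lies entirely inside the band $\cI_b^{(m)}$, hence all its entries are specified. So if a positive definite extension exists, each of the displayed $(m+1)\times(m+1)$ matrices must be positive definite. For sufficiency, which is the genuinely hard direction, my plan is to proceed by induction on $n$, the base case $n=m+1$ being trivial since then $\bSigma_n^{(m)}$ is already fully specified and positive definite by hypothesis. The induction step is the usual one for chordal completions: the unspecified entries of $\bSigma_n^{(m)}$ in the last row/column are $\sigma_{1,n},\ldots,\sigma_{n-m-1,n}$, and one fills them in one at a time using the fact that, at each stage, the constraint that a certain bordered $2\times 2$ block be positive definite amounts to requiring a scalar to lie in an open interval determined by a Schur complement, hence is nonempty. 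Equivalently, I could invoke the Grone--Johnson--S\'a--Wolkowicz completion theorem for chordal patterns, which applies here because the $m$-band graph is chordal and its maximal cliques are exactly the $(m+1)$-subsets of consecutive indices.

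For (ii) I would use Lagrangian duality. Problem \eqref{probl:MaxEntr} is convex with strictly convex objective $-\log\det$ on the open cone of positive definite matrices, and by (i) the feasible set has nonempty interior, so Slater's condition holds. Introduce multipliers $\lambda_{ij}$ for each equality constraint $(i,j)\in\cI_b^{(m)}$, and form the symmetric matrix $\Lambda$ with $\Lambda_{ij}=\lambda_{ij}$ on the band and $\Lambda_{ij}=0$ off the band. The Lagrangian is
\begin{equation*}
L(\bSigma,\Lambda)=-\log\det\bSigma+\mathrm{tr}(\Lambda\bSigma)-\mathrm{tr}(\Lambda\bSigma^\star),
\end{equation*}
where $\bSigma^\star$ is any feasible point. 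Stationarity in $\bSigma$ gives $-\bSigma^{-1}+\Lambda=0$, i.e.\ $\bigl(\bSigma_n^{(m),o}\bigr)^{-1}=\Lambda$, and by construction $\Lambda$ vanishes outside $\cI_b^{(m)}$. This is precisely the bandedness claim. Uniqueness is immediate from strict convexity of $-\log\det$ on the positive definite cone intersected with the affine feasible set.

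The main obstacle is the sufficiency half of (i): writing out the inductive extension cleanly requires keeping track of Schur complements to check that the new entry can indeed be chosen so as to maintain positive definiteness of the next overlapping $(m+1)\times(m+1)$ block, and to verify that the resulting extension is globally positive definite. In contrast, part (ii) is essentially a one-line KKT calculation once Slater's condition is in hand, and the only subtlety is making sure the Lagrange multiplier matrix inherits the required sparsity pattern from the index set of the constraints, which is transparent from the form of the equalities $e_i^\top\bSigma e_j=\sigma_{ij}$.
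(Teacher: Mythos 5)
The paper does not prove this theorem at all: it is imported verbatim from \citet{Dempster1972} and \citet{DymGohberg1981} (and the feasibility part is the band specialization of the chordal completion theorem of \citet{GroneJohnsonSaWolkowicz1984}), so there is no in-paper argument to compare against. Judged on its own, your proposal follows the standard route from that literature and is essentially sound: necessity of \eqref{eqn:CNS_A_admits_posdef_ext} by restriction to the fully specified principal submatrices on consecutive indices, sufficiency by chordal/one-step completion, and bandedness of the inverse by stationarity of $-\log\det$ with multipliers supported on $\cI_b^{(m)}$. For part (ii) you could in fact bypass Slater and duality entirely: the free variables are exactly the off-band entries $x_{ij}$, and $\partial(-\log\det\bSigma)/\partial x_{ij}=-2\,(\bSigma^{-1})_{ij}$ for $i\neq j$, so the first-order condition directly forces $(\bSigma^{-1})_{ij}=0$ for $|i-j|>m$. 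You should, however, also say a word about \emph{existence} of the minimizer, which strict convexity alone does not give: the feasible set is bounded because the diagonal is pinned (so $|c_{ij}|\le\sqrt{c_{ii}c_{jj}}$) and $-\log\det$ blows up on the boundary of the cone, hence the infimum is attained at a positive definite point.

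The one step in your plan that would fail as written is the filling order in the sufficiency half of (i). If you fix the last column and insert $\sigma_{n-m-1,n},\sigma_{n-m-2,n},\dots$ in sequence, then already at the second step the principal submatrix on $\{n-m-2,\dots,n\}$ still contains the unspecified entry $\sigma_{n-m-2,n-1}$, so the ``single unknown corner entry'' one-step extension lemma does not apply. The correct induction is on the bandwidth, not on the column: complete the entire $(m+1)$-st off-diagonal first (each such entry is the corner of an $(m+2)\times(m+2)$ principal block whose other entries are specified, which is exactly the one-step extension of \eqref{eqn:def_Sigma0}), verify the resulting $(m+1)$-band data still satisfies the analogue of \eqref{eqn:CNS_A_admits_posdef_ext}, and iterate until the matrix is full. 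Your fallback of invoking the Grone--Johnson--S\'a--Wolkowicz theorem for the chordal band graph does close this gap, but if you want a self-contained proof you need the bandwidth induction rather than the column-by-column one.
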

The positive definite maximum entropy extension $\bSigma_n^{(m),o}$ 
is also called \emph{central extension} of $\bSigma_n^{(m)}$. 

Let $\bar \Sigma$ be such that $\left[\bar \Sigma\right]_{ij} = \sigma_{ij} \quad (i,j) \in \cI_b^{(m)}$. 
Then, it can be shown \citep{Dempster1972,DahlVanderbergheRoychowdhury2008} that Problem \eqref{probl:MaxEntr} 
is equivalent to the following optimization problem 
\begin{subequations}\label{probl:MaxLik} 
\begin{eqnarray}
%\begin{array}{ll}
\underset{ }{{\rm minimize}} & \log \det \bSigma_n^{(m)} 
+ {\rm trace} \left(\bar \Sigma  \left(\bSigma_n^{(m)}\right)^{-1} \right) \label{eqn:maxlik_obj}\\ 
\text{subject to } & \bSigma_n^{(m)} \succeq 0 \label{eqn:maxlik_c1}\\
& e_i^\top \, \left(\bSigma_n^{(m)}\right)^{-1}\, e_j = 0, \quad (i,j)\in \bar{\cI}^{(m)}_b   \label{eqn:maxlik_c2}
%\left(\bSigma_n^{(m)}\right)^{-1} \text{ tridiagonal }
%\end{array}
\end{eqnarray}
\end{subequations}
If we denote with $\theta = \bmat \theta_1, \dots, \theta_n\emat^\top$ a zero--mean Gaussian random vector with covariance $\bSigma_n^{(m)}$, 
%it can be shown \citep{Dempster1972} that 
%the constraints \eqref{eqn:maxlik_c2} are equivalent to say that 
then \eqref{eqn:maxlik_c2} holds if and only if the random variables $\theta_i$, $\theta_j$ in $\theta$ 
 are conditionally independent  given the others (see e.g. \citep{Dempster1972}). %, namely $\theta_k$, $\forall \, k =1, \dots, n$, $k \neq i,j$.
%namely, if we denote with $x$ a zero--mean Gaussian random vector with sample covariance $\bar \bSigma$, 
In other words, if we denote with $\bar \bSigma$ the sample covariance of $\theta$, 
%the above equivalence 
the equivalence between Problem \eqref{probl:MaxEntr} and Problem \eqref{probl:MaxLik} 
states that the %$\bSigma_n^{(m),o}$ 
covariance matrix that maximizes the entropy among all the covariance matrices with 
given first $m+1$ covariance lags, is also the one that maximizes the likelihood among all 
the covariance matrices satisfying the conditional independence constraints \eqref{eqn:maxlik_c2}. 

For banded sparsity pattern like those considered so far, Problem \eqref{probl:MaxEntr} admits a closed form solution that can be computed recursively in the following way. 
We start by considering a partially specified $n \times n$ symmetric matrix of bandwidth $(n-2)$  %$(n-1)$ %Consider the 
\begin{equation}\label{eqn:def_Sigma0}
\bSigma_{n}^{(n-2)} =
\bmat
\sigma_{1,1} & \sigma_{1,2} & \ldots & \sigma_{1,n-1} & x \\
\sigma_{1,2} & \sigma_{2,2} & \ldots & \sigma_{2,n-1} & \sigma_{2,n} \\
\vdots & \vdots  & & \vdots & \vdots \\
\sigma_{1,n-1} & \sigma_{2,n-1} & \ldots & \sigma_{n-1,n-1} & \sigma_{n-1,n} \\
x & \sigma_{2,n} & \ldots & \sigma_{n-1,n} & \sigma_{n,n}\\
\emat
\end{equation}
and consider the submatrix %following submatrices of $A(0)$
\begin{equation}\label{eqn:LMR}
L = [\sigma_{ij}]_{i,j = 1}^{n-1}\,. %, \qquad M=[a_{ij}]_{i,j=1}^{n-1}, \qquad R=[a_{ij}]_{i,j=1}^{n}
\end{equation}
We call \emph{one--step extensions} the extensions of $n \times n$, $(n-2)$--band matrices. 
The following theorem gives a recursive algorithm to compute the extension 
%of generic partially specified $m$--band matrix by computing the one--step extensions 
of partially specified matrices of generic bandwidth $m$ by computing the one--step extensions 
of suitable submatrices. It also gives a representation of the solution in factored form. %closed form solution in factored form. 

\begin{theorem}[\citep{GohbergGoldbergKaashoek1993}, \citep{DymGohberg1981}]
\label{thm:forma_chiusa_estensione_centrale}
\begin{itemize}
	\item[$(i)$] The one--step central extension of $\bSigma_{n}^{(n-2)}$ is given by
 \begin{equation}\label{eqn:z0}
x^o = - \frac{1}{y_1} \sum_{j=2}^{n-1} \sigma_{nj} y_j
\end{equation}
with 
\begin{equation}\label{eqn:y}
\bmat  y_1\\ y_2 \\ \vdots \\ y_{n-1} \\ \emat
= L^{-1} \bmat 1 \\ 0 \\ \vdots \\ 0 \\ \emat \,. 
\end{equation}
Let $\bSigma_n^{(m)}$ be an $n \times n$ partially specified $m$--band matrix. 
%, i.e. such that $\sigma_{ij}=0$ for $|i-j|>m$.  
The central extension $C = [c_{ij}]_{i,j=1}^{n} $ of $\bSigma_n^{(m)}$ is such that  
%has the  property that 
for all $m+1 \, < \, t \leq n$ %$m+1 \leq t \leq n$ 
and $1 \leq s \leq t-m-1$ the submatrices 
\begin{equation}\label{eqn:prop_estens_centrale}
C(\left\{s,\dots,t\right\}) = 
\begin{bmatrix}
c_{s,s} & \cdots & c_{s,t} \\
\vdots & & \vdots \\
c_{t,s} & \cdots & c_{t,t}
\end{bmatrix}
\end{equation}
are the central one--step extensions of the corresponding $(t-s-1)$--band matrix.  

	\item[$(ii)$] In particular, the central extension of the partially specified symmetric $m$-band matrix $\bSigma_n^{(m)}$ admits the factorization
\begin{equation}\label{eqn:central_extension_factored_form}
\bC = \left(L_n^{(m)}V U_n^{(m)}\right)^{-1}
\end{equation}
where $L_n^{(m)} = \left[\ell_{ij}\right]$ is a lower triangular banded matrix with ones on the main diagonal, 
$\ell_{jj} = 1$, for $j=1, \dots, n$, 
and
\begin{equation}\label{eqn:Xm}
\bmat \ell_{\alpha j}\\ \vdots \\\ell_{\beta j} \emat = 
- \bmat \sigma_{\alpha \alpha} & \dots & \sigma_{\alpha \beta}\\
\vdots & \ddots  &  \vdots\\
\sigma_{\beta \alpha} & \dots & \sigma_{\beta \beta}
\emat^{-1}
\bmat \sigma_{\alpha j}\\ \vdots \\\sigma_{\beta j} \emat %\quad 
\end{equation}
for $j=1, \dots, n-1$, $U_n^{(m)} = \left(L_n^{(m)} \right)^\top$ 
and $V=\left[v_{ij}\right]$  diagonal with entries 
\begin{equation}\label{eqn:V}
v_{jj} = \left(\bmat \sigma_{jj} & \dots & \sigma_{j \beta}\\
\vdots & \ddots  &  \vdots\\
\sigma_{\beta j} & \dots & \sigma_{\beta \beta}
\emat^{-1}\right)_{1,1}\,, %\quad  j=1, \dots, n
\end{equation}
for $j=1, \dots, n$, where
$$
\begin{array}{lll}
\alpha = \alpha(j) = j+1 & \text{ for } & j=1, \dots, n-1\,,\\
\beta = \beta(j) = \min(j+m,n) & \text{ for } & j = 1, \dots, n\,. 
\end{array}
$$
\end{itemize}
\end{theorem}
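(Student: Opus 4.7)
The plan is to handle part (i) by using the banded-inverse property from Theorem~\ref{thm:feasibility_and_bandedness}(ii), first in the one-step case and then inductively in the general case, and finally part (ii) by identifying the proposed factorization with the LDL$^\top$ decomposition of $\bC^{-1}$.

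\textbf{One-step extension.} By Theorem~\ref{thm:feasibility_and_bandedness}(ii), the inverse of the central extension of the $(n-2)$-band partial matrix $\bSigma_n^{(n-2)}$ has its single off-band entry equal to zero, i.e.\ $(M^{-1})_{n,1} = 0$ where $M := \bSigma_n^{(n-2)}(x^o)$. Setting $y := M^{-1} e_1$, this says $y_n = 0$. Reading $M y = e_1$ row by row with $y_n = 0$: the only $x$-dependent entry in the first $n-1$ rows sits at position $(1,n)$ and is multiplied by $y_n=0$, so these rows reduce to $L\,[y_1,\ldots,y_{n-1}]^\top = e_1^{(n-1)}$, yielding \eqref{eqn:y}; the last row then gives $x^o y_1 + \sum_{j=2}^{n-1} \sigma_{nj} y_j = 0$, i.e.\ \eqref{eqn:z0}. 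Here $y_1 > 0$ since $L \succ 0$ by the feasibility condition \eqref{eqn:CNS_A_admits_posdef_ext}.

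\textbf{Recursive statement.} I would argue by induction on the gap $t-s$. For the base case $t-s = m+1$, every entry of $C(\{s,\ldots,t\})$ except the corner $(1,t-s+1)$ lies within the original $m$-band and is therefore specified. For $t-s > m+1$, every principal sub-submatrix of $C(\{s,\ldots,t\})$ with strictly smaller gap is already determined, either by the original specification or by the inductive hypothesis. Theorem~\ref{thm:feasibility_and_bandedness}(ii) applied globally forces $(\bC^{-1})_{s,t}=0$; since for banded (equivalently, chordal) sparsity patterns this zero condition localizes to the principal submatrix via the block-inversion formula (a standard fact from the band-extension literature \citep{GohbergGoldbergKaashoek1993}), $C(\{s,\ldots,t\})^{-1}$ also has zero at $(1,t-s+1)$. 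Hence $C(\{s,\ldots,t\})$ is the one-step central extension of its own restriction, proving the claim.

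\textbf{Factorization.} The aim is to identify $L_n^{(m)} V U_n^{(m)}$ with the LDL$^\top$ factorization of $\bC^{-1}$. Since $\bC \succ 0$, there exist unique unit lower triangular $\tilde L$ and positive diagonal $\tilde V$ with $\bC^{-1} = \tilde L \tilde V \tilde L^\top$; moreover, bandedness of $\bC^{-1}$ (Theorem~\ref{thm:feasibility_and_bandedness}(ii)) classically forces $\tilde L$ to be banded of bandwidth $m$. To obtain the closed-form expressions, consider the principal submatrix $C(\{j,\ldots,\beta\})$ with $\beta=\min(j+m,n)$ and partition it as $\bmat \sigma_{jj} & r^\top \\ r & S \emat$, $r := (\sigma_{j,j+1},\ldots,\sigma_{j,\beta})^\top$, $S := [\sigma_{pq}]_{p,q=\alpha}^{\beta}$. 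Block inversion yields that the first column of $C(\{j,\ldots,\beta\})^{-1}$ equals $(\sigma_{jj}-r^\top S^{-1} r)^{-1}\bmat 1 \\ -S^{-1} r \emat$: its top entry is \eqref{eqn:V}, and its lower block, after normalizing the $j$-th column of $\tilde L$ to have unit diagonal, is \eqref{eqn:Xm}. The main obstacle is matching these local Schur quantities to the entries of the global factor $\tilde L$: this is ensured by the bandedness of $\tilde L$ together with part (i), which guarantees that each $C(\{j,\ldots,\beta\})$ is itself a central extension, so its local LDL$^\top$ coincides with the relevant block of the global one.
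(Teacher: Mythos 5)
First, note that the paper itself offers no proof of this theorem: it is imported verbatim from \citep{DymGohberg1981} and \citep{GohbergGoldbergKaashoek1993}, so there is no in--paper argument to compare yours against. Judged on its own terms, your reconstruction is sound in structure and follows the standard modern route (banded inverse of the central extension plus Schur--complement/block--inversion identities) rather than the original arguments of those references. The one--step case is fully correct: $y:=M^{-1}e_1$ has $y_n=(M^{-1})_{n,1}=0$ by Theorem~\ref{thm:feasibility_and_bandedness}(ii), the first $n-1$ rows of $My=e_1$ then collapse to $L\,[y_1,\dots,y_{n-1}]^\top=e_1$, the last row gives \eqref{eqn:z0}, and $y_1=e_1^\top L^{-1}e_1>0$ because $L\succ 0$ by \eqref{eqn:CNS_A_admits_posdef_ext}.

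The two places where you assert rather than prove are both true but deserve to be spelled out, since they carry the whole weight of the argument. (a) The localization of the zero pattern: with $A:=\bC^{-1}$ and $J=\{s,\dots,t\}$, block inversion gives $C(J)^{-1}=A(J)-A(J,J^c)A(J^c)^{-1}A(J^c,J)$; because $|J|\ge m+2$, the two components $J_1=\{1,\dots,s-1\}$ and $J_2=\{t+1,\dots,n\}$ of $J^c$ satisfy $A(J_1,J_2)=0$, so the correction splits into a piece supported on rows and columns $\{s,\dots,s+m-1\}$ and one on $\{t-m+1,\dots,t\}$, neither of which reaches the corner $(s,t)$; hence $(C(J)^{-1})_{1,t-s+1}=A_{s,t}=0$. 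You also implicitly need the converse of Theorem~\ref{thm:feasibility_and_bandedness}(ii) --- that a positive definite completion with the complementary zero pattern in its inverse \emph{is} the central extension --- which follows from the optimality conditions of the convex problem \eqref{probl:MaxEntr} but is not literally what the theorem as quoted states. Note also that no induction on $t-s$ is actually needed here; the localization argument applies to each pair $(s,t)$ directly. (b) The matching of the local Schur quantities to the global unit--lower--triangular factor $\tilde L$: the phrase ``its local LDL$^\top$ coincides with the relevant block of the global one'' is not true for an arbitrary window $\{j,\dots,\beta\}$; what is true, and suffices, is that the factorization $\bC^{-1}=\tilde L\tilde V\tilde L^\top$ restricts exactly to trailing blocks $\{j,\dots,n\}$ (so the $j$--th column of $\tilde L$ is the normalized first column of $\bC(\{j,\dots,n\})^{-1}$), and that bandedness of $A$ confines that first column to rows $j,\dots,j+m$ and makes it agree there with the first column of $C(\{j,\dots,\beta\})^{-1}$ --- again by the localization computation in (a). With these two lemmas made explicit your proof is complete; as written it is a correct sketch with the right tools named but the decisive computations left implicit.
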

%\begin{theorem}[\citep{GohbergGoldbergKaashoek1993}, \citep{DymGohberg1981}]
%\label{thm:forma_chiusa_estensione_centrale}
%\end{theorem}

%Let $\bar \Sigma$ be such that 
%$$
%\left[\bar \Sigma\right]_{ij} = \sigma_{ij} \quad (i,j) \in \cI_b^{(m)}
%$$
%Then, it can be shown \citep{Dempster1972,DahlVanderbergheRoychowdhury2008} that Problem \eqref{probl:MaxEntr} 
%is indeed equivalent to the following optimization problem 
%\begin{equation}
%\begin{array}{ll}
%\underset{ K}{{\rm minimize}} & -\log \det K^{-1} + {\rm trace} \left(\bar \Sigma  K^{-1} \right)\\ 
%\text{subject to } & K^{-1} \succeq 0\\
%& K^{-1} \text{ tridiagonal }
%\end{array}
%\end{equation}
%namely the First--Order Stable Spline Kernel is indeed the 

%We now consider the following optimization problem 
%\begin{equation}\label{probl:MaxEntr}
%\begin{array}{ll}
%\underset{ K}{{\rm minimize}} & -\log \det K \\ 
%\text{subject to } & K \succeq 0\\
%& {\rm diag}(K) = \bmat \alpha & \alpha^2 & \alpha^3 &\alpha^4 \emat \\
%& {\rm diag}(K,1) = \bmat  \alpha^2 & \alpha^3 &\alpha^4 \emat 
%\end{array}
%\end{equation}
%with optimal value $K^*$. 

%\section{First-order Stable Spline kernel as Maximum Entropy kernel}\label{sec:FirstOdertSSk_as_MaxEntrkernel}
\section{Maximum Entropy properties of the First-order Stable Spline kernel and Its Implications}\label{sec:FirstOdertSSk_as_MaxEntrkernel}

%We are now ready to state our main results. 
%
%
%Maximum entropy properties of the stable spline kernel have been pointed out in the literature. 
%In this paper we provide an independent proof that relies on the theory of matrix extension problems 
%in the graphical model literature 
%and leads to a closed form expression for the inverse of the first order stable spline kernel as well as to a 
%new factorization %of kernels belonging to this class in
%in the form $UWU^\top$ with $U$ upper triangular and $W$ diagonal. 
%Interestingly, all first--order stable spline kernels share the same factor $U$ and the elements of $W$ 
%admits a closed form representation in terms of the kernel hyperparameter, making the factorization computationally inexpensive.  
%Maximum likelihood properties of the stable spline kernel are also highlighted. 
%
%We peovide and alternative prrof of the maximum entropy property of the stable spline kenrel of order one

%Maximum entropy properties of the stable spline kernel have been pointed out in \citep{PillonettoDeNicolao2011}. 
In this section, we provide an independent proof of the maximum entropy property of first--order stable spline kernels that relies on the theory of matrix extension problems 
introduced in the previous section. This argument leads to a closed form expression for the inverse of the first order stable spline kernel as well as to a new factorization. 
Maximum likelihood properties of the stable spline kernel are also highlighted.

	\begin{proposition}\label{prop:MaxEntr_for_TCkernel}
	Consider Problem \eqref{probl:MaxEntr} with $m=1$ and 
	\begin{equation}\label{eqn:moment_contraints_TC}
	\sigma_{ij} = \cK_{ij} = \alpha^{\max(i,j)}, \quad (i,j) \in \cI_b^{(1)}
	\end{equation}
	i.e. consider the partially specified $1$--band matrix
	$$
	\bSigma_n^{(1)}(x) = \bmat
	\alpha & \alpha^2 & x_{13} & \dots & \dots & x_{1n}\\
	\alpha^2 & \alpha^2 & \alpha^3 & x_{24}  & \dots & x_{2n}\\
	x_{13} & \alpha^3 & \alpha^3 & \alpha^4 &  & \vdots \\
	\vdots &  & \ddots & \ddots & \ddots & x_{n-2,1}\\
	\vdots &  &  & \alpha^{n-1}& \alpha^{n-1} & \alpha^{n}\\
	x_{1n} & \dots & \dots & x_{n-2,1} & \alpha^n & \alpha^n
	\emat
	$$
	Then  
	$\bSigma_n^{(1)}(x^o) = \cK$, i.e.  
	the solution of the Maximum Entropy Problem \eqref{probl:MaxEntr} coincides with the first order stable spline kernel. 
	\end{proposition}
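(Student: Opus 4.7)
The plan is to exploit the characterization of the central extension as the unique positive definite completion with $1$-banded (tridiagonal) inverse supplied by Theorem~\ref{thm:feasibility_and_bandedness}, and to exhibit the matrix with entries $\alpha^{\max(i,j)}$ as such a completion via an explicit $UWU^\top$ factorization. Feasibility is immediate: the $2\times 2$ diagonal blocks $\bmat \alpha^i & \alpha^{i+1} \\ \alpha^{i+1} & \alpha^{i+1} \emat$ have determinant $\alpha^{2i+1}(1-\alpha) > 0$ since $0 < \alpha < 1$, so by Theorem~\ref{thm:feasibility_and_bandedness}(i) the partial matrix $\bSigma_n^{(1)}$ admits a positive definite completion, and the unique central extension $\bSigma_n^{(1),o}$ provided by Theorem~\ref{thm:feasibility_and_bandedness}(ii) is characterized by having a tridiagonal inverse.

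The core step is to verify that the natural candidate $\cK$ with $\cK_{ij} := \alpha^{\max(i,j)}$, which obviously agrees with $\bSigma_n^{(1)}$ on the band, admits the factorization
\[
\cK \,=\, U\,W\,U^\top,
\]
where $U \in \R^{n\times n}$ is upper triangular with $U_{ij} = 1$ for all $i \leq j$ and
$W = {\rm diag}\bigl((1-\alpha)\alpha,\,(1-\alpha)\alpha^2,\,\dots,\,(1-\alpha)\alpha^{n-1},\,\alpha^n\bigr)$. Since $W$ is diagonal, $(UWU^\top)_{ij} = \sum_{k=\max(i,j)}^{n} W_{kk}$, and this sum telescopes to $\alpha^{\max(i,j)}$. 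Because $0 < \alpha < 1$ forces $W \succ 0$, this also yields $\cK \succ 0$.

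To conclude I would observe that $U = I + N + \cdots + N^{n-1}$, where $N$ is the nilpotent superdiagonal shift, so $U^{-1} = I - N$ is bidiagonal with $1$s on the diagonal and $-1$s on the superdiagonal. Hence
\[
\cK^{-1} \,=\, (U^{-1})^\top W^{-1} U^{-1}
\]
is the product of a lower bidiagonal, a diagonal, and an upper bidiagonal matrix, and therefore tridiagonal. Thus $\cK$ is a positive definite extension of $\bSigma_n^{(1)}$ with tridiagonal inverse, and by the uniqueness in Theorem~\ref{thm:feasibility_and_bandedness}(ii) it must coincide with $\bSigma_n^{(1),o}$.

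The main obstacle is recognizing the $UWU^\top$ factorization and in particular the closed-form diagonal $W$; once this is in hand, both the telescoping identity and the bidiagonality of $U^{-1}$ are routine, and the conclusion follows directly from Theorem~\ref{thm:feasibility_and_bandedness}. An alternative but substantially longer route would be to build the extension entry by entry using the recursion in Theorem~\ref{thm:forma_chiusa_estensione_centrale}(i) and verify by induction that each one-step completion reproduces $\alpha^{\max(i,j)}$; this is less elegant and does not immediately deliver the $UWU^\top$ factorization advertised in the abstract.
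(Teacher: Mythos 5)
Your proof is correct but follows a genuinely different route from the paper's. The paper proves the proposition by induction on $n$ using the one--step extension recursion of Theorem \ref{thm:forma_chiusa_estensione_centrale}$(i)$: for each $s$ it solves \eqref{eqn:y} with $L = \cK(\{s,\dots,k\})$, observes via the adjugate that $y_2 = -y_1$ and $y_3 = \dots = y_k = 0$, and concludes from \eqref{eqn:z0} that $x_{s,k+1}^o = \alpha^{k+1}$ — exactly the "substantially longer route" you set aside. You instead guess the completion $\cK_{ij}=\alpha^{\max(i,j)}$, certify it through the factorization $\cK = UWU^\top$ (your $W$ agrees entry by entry with the paper's \eqref{eqn:Vinv_TCkernel}, since $(\alpha-\alpha^2)\alpha^{k-1}=(1-\alpha)\alpha^k$ and $(\alpha-\alpha^2)\frac{\alpha^{n-1}}{1-\alpha}=\alpha^n$), and invoke uniqueness of the positive definite completion with $1$--banded inverse. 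Your telescoping identity $\sum_{k=\max(i,j)}^{n}w_k=\alpha^{\max(i,j)}$ and the bidiagonality of $U^{-1}=I-N$ are both right, and your approach buys more: it delivers in one stroke the $UWU^\top$ factorization, the positivity of $\cK$, and the tridiagonal form of $\cK^{-1}$, which the paper only obtains afterwards in a separate proposition via Theorem \ref{thm:forma_chiusa_estensione_centrale}$(ii)$. The one point to tighten: Theorem \ref{thm:feasibility_and_bandedness}$(ii)$ as stated only says the maximum entropy solution \emph{has} a banded inverse, not that a positive definite completion with banded inverse \emph{must be} the maximum entropy solution; you use the latter (the Dym--Gohberg characterization of the central extension). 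This converse is standard and easy to supply — stationarity of the convex problem \eqref{probl:MaxEntr} in the free variables $x_{ij}$ reads $\bigl(\bSigma_n^{(m)}(x)^{-1}\bigr)_{ij}=0$ for $(i,j)\in\bar{\cI}_b^{(m)}$, so any positive definite completion with banded inverse satisfies the (necessary and sufficient) optimality conditions and coincides with the unique optimum — but as written you are leaning on a characterization the quoted theorem does not literally assert, so add a line to that effect.
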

	\vspace{3mm}

	\begin{proof} 
	By Theorem \ref{thm:forma_chiusa_estensione_centrale}, the maximum entropy completion of $\bSigma_n^{(1)}(x)$ can be recursively computed 
	starting from the maximum entropy completions of the nested principal submatrices of smaller size.  
	The statement can thus be proved by induction on the dimension $n$ of the completion. 
	%\textcolor{blue}{The proof is by induction on the dimension $n$ of the completion.  }
	\begin{itemize}
		\item Let $n=3$, then % $\bSigma_3^{(1)}(x_0) = \cK([1,3])$. 
	%Indeed, 
	%according to Theorem \ref{thm:forma_chiusa_estensione_centrale} (i), 
	by \eqref{eqn:z0}--\eqref{eqn:y}, the central extension of 
	$$
	%\bSigma_3^{(1)}(x) = 
	\bmat \alpha & \alpha^2 & x_{13}\\
	\alpha^2 &\alpha^2 & \alpha^3 \\
	x_{13} & \alpha^3 &\alpha^3\emat 
	$$
	is given by $x_{13}^o= \alpha^3 = \cK(1,3)$, as claimed. %so that $\bSigma_3^{(1)}(x^o) = \cK([1,3])$. %, as claimed.  

	\item Now assume that the statement holds for $n=k$, $k \geq 3$, i.e. that 
	$\cK(\left\{1,\dots, k\right\})$ is the central extension of $\bSigma_n^{(1)}(\left\{1,\dots, k\right\})$. 
	We want to prove that $\cK(\left\{1,\dots, k+1\right\})$ 
	is the central extension of $\bSigma_{n}^{(1)}(\left\{1,\dots, k+1\right\})$.  
	To this aim, we only need to prove that the $(k-1)$ submatrices $\cK(\left\{s,\dots, k+1\right\})$, $1 \leq s \leq k-1$, 
 	are the central one--step extensions of the corresponding $(k-s)$--band matrices   
  $$
		\bmat
		\alpha^s & \dots & \alpha^k & x_{s,k+1}\\
		 \vdots & \ddots &  & \alpha^{k+1}\\
		 &  & \ddots & \vdots \\
		x_{s,k+1} & \alpha^{k+1} & \dots & \alpha^{k+1} 
		\emat\,.
		$$
	or, equivalently, that $x_{s,k+1}^o = \cK(s,k+1)$, for $s=1, \dots, k-1$. 
	In order to find $x_{s,k+1}^o$, we consider \eqref{eqn:y}, which, by the inductive hypothesis, becomes  
	\begin{equation*}
	\bmat  y_1^{(s,k+1)}\\ y_2^{(s,k+1)} \\ \vdots \\ y_{k}^{(s,k+1)} \\ \emat
	= \cK(\left\{s, \dots, k\right\})^{-1} \bmat 1 \\ 0 \\ \vdots \\ 0 \\ \emat \,. 
	\end{equation*}
	%with $L = \cK(\left\{s, \dots, k\right\})$. 
	By considering the adjoint of $\cK(\left\{s, \dots, k\right\})$ %and exploiting the properties of the determinant, 
	one can see that 
	$y_2^{(s,k+1)} = -  y_1^{(s,k+1)}$ 
	while all the others $y_{i}^{(s,k+1)}$, $i=3, \dots, k$ are identically zero. %$y_3^{s,k+1} = \dots = y_{n-1}^{s,k+1} = 0$ 
	It follows that 
	$$
	x_{s,k+1}^o = -\frac{1}{y_1^{(s,k+1)}} \, y_2^{(s,k+1)} \alpha^{k+1}  = \alpha^{k+1}	= \cK(s,k+1)\,,
	$$
	as claimed. 
	
\end{itemize}
	
	\end{proof}

%%%%%%%%%
%Particularizing to the case of the DC kernel of order $1$ and Gaussian process regression we get the following. 
From the equivalence between the maximum entropy problem \ref{probl:MaxEntr} and 
the maximum likelihood problem \ref{probl:MaxLik} we get the following. 
\begin{proposition}
Let $m=1$ and 
$\left[\bar \Sigma\right]_{ij}= \left[\theta \theta^\top\right]_{ij}=\alpha^{\max(i,j)}$, $(i,j) \in \cI_b^{(1)}$,  
then the first-order stable spline kernel maximizes the likelihood in \eqref{eqn:maxlik_obj} among all covariances that satisfies \eqref{eqn:maxlik_c1} and the conditional independence constraints \eqref{eqn:maxlik_c2}. 
%If the impulse response $\theta$ is such that $\left[\bar \Sigma\right]_{ij}= \left[\theta \theta^\top\right]_{ij}=\alpha^{\max(i,j)}$, $(i,j) \in \cI_b^{(1)}$, then the first--order stable spline kernel 
%maximizes the likelihood among all covariances that satisfy 
%the conditional independence constraints \eqref{eqn:maxlik_c2}. 
\end{proposition}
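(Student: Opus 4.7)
The plan is to reduce the statement to Proposition \ref{prop:MaxEntr_for_TCkernel} via the equivalence between the maximum entropy problem \eqref{probl:MaxEntr} and the maximum likelihood problem \eqref{probl:MaxLik} that the paper already recorded in the paragraph preceding Theorem \ref{thm:forma_chiusa_estensione_centrale}. Essentially all of the nontrivial algebraic work — the inductive identification of the central extension — was done in the preceding proposition; here the task is to recast that conclusion in the dual (maximum likelihood) formulation by careful bookkeeping of the two parametrizations, band moments versus banded inverse.

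First, I would match the hypothesis against the set-up of \eqref{probl:MaxLik}. The sample covariance $\bar\Sigma$ is assumed to agree with $\alpha^{\max(i,j)}$ on the tridiagonal index set $\cI_b^{(1)}$, which is precisely the partial data used to define $\bSigma_n^{(1)}(x)$ in Proposition \ref{prop:MaxEntr_for_TCkernel}. Feasibility of the optimization is immediate: since $\cK$ is itself positive definite for $0\leq\alpha<1$ and already interpolates the prescribed entries, it provides an explicit positive definite completion, so the test \eqref{eqn:CNS_A_admits_posdef_ext} in Theorem \ref{thm:feasibility_and_bandedness}(i) is satisfied, and the constraint set of \eqref{probl:MaxLik} is nonempty.

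Second, I would invoke the equivalence. Any covariance satisfying \eqref{eqn:maxlik_c1}-\eqref{eqn:maxlik_c2} has its inverse banded of bandwidth one; by the uniqueness part of Theorem \ref{thm:feasibility_and_bandedness}(ii), such a matrix is forced to be the central extension of $\bSigma_n^{(1)}(x)$ determined by the given band entries of $\bar\Sigma$. Thus the maximizer of \eqref{eqn:maxlik_obj} subject to \eqref{eqn:maxlik_c1}-\eqref{eqn:maxlik_c2} coincides with the maximizer of \eqref{probl:MaxEntr} for the same band data. Proposition \ref{prop:MaxEntr_for_TCkernel} then identifies this common optimizer as the first-order stable spline kernel $\cK$, which yields the claim.

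The only point that requires a bit of care — and what I would expect to be the one place where a hurried reader could slip — is the translation between the moment constraints (6c) of \eqref{probl:MaxEntr} and the conditional independence / zero-inverse constraints \eqref{eqn:maxlik_c2} of \eqref{probl:MaxLik}: one must check that with $\bar\Sigma$ fixed to match the stable spline kernel on $\cI_b^{(1)}$, the trace term in \eqref{eqn:maxlik_obj} depends only on the specified band entries (because the zeros in $(\bSigma_n^{(1)})^{-1}$ off the band kill the unspecified entries of $\bar\Sigma$), so that the log-likelihood objective reduces, up to an additive constant, to $\log\det \bSigma_n^{(1)}$ plus a constant. Once this identification is made, the two optima agree and there is no further content to prove.
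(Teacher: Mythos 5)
Your overall plan coincides with the paper's own (one--line) proof: the paper obtains this proposition by combining the equivalence between Problem \eqref{probl:MaxEntr} and Problem \eqref{probl:MaxLik}, quoted from \citep{Dempster1972,DahlVanderbergheRoychowdhury2008}, with Proposition \ref{prop:MaxEntr_for_TCkernel}, and that is exactly the reduction you propose. The feasibility remark and the observation that ${\rm trace}\bigl(\bar\Sigma\,(\bSigma_n^{(m)})^{-1}\bigr)$ only involves the band entries of $\bar\Sigma$ (because the inverse vanishes off the band) are both correct and worth recording.

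However, the paragraph in which you ``invoke the equivalence'' contains a genuine logical error. You assert that any covariance satisfying \eqref{eqn:maxlik_c1}--\eqref{eqn:maxlik_c2} is \emph{forced}, by the uniqueness in Theorem \ref{thm:feasibility_and_bandedness}$(ii)$, to be the central extension of the band data of $\bar\Sigma$. This is false: the feasible set of \eqref{probl:MaxLik} consists of \emph{all} positive definite matrices with tridiagonal inverse, and such a matrix is the central extension of \emph{its own} band entries, which need not agree with those of $\bar\Sigma$. The uniqueness in Theorem \ref{thm:feasibility_and_bandedness}$(ii)$ concerns the optimizer of \eqref{probl:MaxEntr} for fixed band data, not membership in the feasible set of \eqref{probl:MaxLik}. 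For the same reason, the trace term in \eqref{eqn:maxlik_obj} is \emph{not} an additive constant over that feasible set (it depends on the varying entries of $(\bSigma_n^{(m)})^{-1}$ on the band), so the objective does not reduce to $\log\det$ alone. The correct content of the equivalence, which the paper simply cites, is obtained from the first--order optimality conditions: the unique minimizer of \eqref{probl:MaxLik} is characterized by matching $\bar\Sigma$ on $\cI_b^{(1)}$ while having a tridiagonal inverse, and this is precisely the characterization of the maximum entropy completion of the band of $\bar\Sigma$; hence the two optimizers coincide and Proposition \ref{prop:MaxEntr_for_TCkernel} identifies them with $\cK$. If you replace your uniqueness argument with this duality statement (or simply cite it, as the paper does), the proof is complete.
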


%%%%%%%%%

\begin{proposition}%\label{prop:MaxEntr_for_TCkernel}
The following are equivalent
\begin{itemize}
	\item[$(i)$] $\cK$ solves Problem \eqref{probl:MaxEntr} with $m=1$ and 
	moment constraints as in \eqref{eqn:moment_contraints_TC}. 
	\item[$(ii)$] $\cK$ admits the factorization 
\begin{equation}\label{eqn:fatt_KTC}
\cK = U W U^\top%\bar U_n \bar V \bar U_n^\top 
%= U V^{-1} U^\top 
%= \left(L_n^{(1)}\right)^{-\top}V^{-1}\left(L_n^{(1)}\right)^{-1}
\end{equation}
with 
\begin{equation}\label{eqn:Lm_invT_TCkernel}
%U = \left(L_n^{(1)}\right)^{-\top}
U = \bmat 1&1& \dots &1\\0&1&\dots&1\\0&0& \ddots&\vdots\\0&\dots & 0 & 1\emat\,,
\end{equation}
and 
\begin{equation}\label{eqn:Vinv_TCkernel}
%V^{-1} 
W = {(\alpha-\alpha^2)}{\rm diag}\left\{1, {\alpha},{\alpha^2}, \dots, 
{\alpha^{n-2}},\frac{\alpha^{n-1}}{1-\alpha}\right\}\,.
\end{equation}
%(where we set $U= \left(U_n^{(1)}\right)^{-1}$ and $W = V^{-1}$, for short). 
\end{itemize}
\item[$(iii)$] %In particular, 
$\cK^{-1}$ is tridiagonal banded and is given by 
\begin{equation}\label{eqn:KTC_inv}
%\cK^{-1} =  
\frac{1}{\alpha-\alpha^2}\bmat
1&-1&0&\dots&0\\
-1&1+\frac{1}{\alpha} & -\frac{1}{\alpha} & \ddots & \vdots\\
0&  &\frac{1}{\alpha}+\frac{1}{\alpha^2} &  & 0\\
\vdots&\ddots&  & \ddots& -\frac{1}{\alpha^{n-2}}\\
0& \dots& 0& -\frac{1}{\alpha^{n-2}}& \frac{1}{\alpha^{n-2}}+\frac{1-\alpha}{\alpha^{n-1}}
\emat
\end{equation} 
\end{proposition}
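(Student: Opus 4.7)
The plan is to prove the three conditions equivalent via a cyclic chain (i) $\Rightarrow$ (ii) $\Rightarrow$ (iii) $\Rightarrow$ (i). The step (i) $\Rightarrow$ (ii) uses the explicit factored form of the central extension from Theorem \ref{thm:forma_chiusa_estensione_centrale}(ii); the step (ii) $\Rightarrow$ (iii) is a direct matrix computation; and (iii) $\Rightarrow$ (i) is closed by invoking the uniqueness statement in the bandedness part of Theorem \ref{thm:feasibility_and_bandedness}(ii). Proposition \ref{prop:MaxEntr_for_TCkernel} already supplies (i), so the loop is nontrivial only in its explicit content.

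For (i) $\Rightarrow$ (ii), I would specialize Theorem \ref{thm:forma_chiusa_estensione_centrale}(ii) to $m=1$. The index maps collapse to $\alpha(j)=\beta(j)=j+1$ for $j=1,\dots,n-1$, so the blocks in \eqref{eqn:Xm} reduce to the scalar $\sigma_{j+1,j+1}=\alpha^{j+1}$ and, together with $\sigma_{j+1,j}=\alpha^{j+1}$, give $\ell_{j+1,j}=-1$. Thus $L_n^{(1)}$ is lower bidiagonal with $1$'s on the diagonal and $-1$'s on the subdiagonal, and a direct back-substitution shows its inverse to be the lower triangular matrix of all $1$'s; hence $U:=(L_n^{(1)})^{-\top}$ matches \eqref{eqn:Lm_invT_TCkernel}. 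For $j\le n-1$ the $2\times 2$ block in \eqref{eqn:V} has determinant $\alpha^{2j+1}(1-\alpha)$ and yields $v_{jj}=1/[\alpha^{j}(1-\alpha)]$, while the boundary case $j=n$ (where $\beta(n)=n$) collapses to the scalar block and gives $v_{nn}=1/\alpha^n$. Inverting $V$ and factoring out $\alpha-\alpha^2$ reproduces \eqref{eqn:Vinv_TCkernel}, so substituting into \eqref{eqn:central_extension_factored_form} delivers $\cK=UWU^\top$ with $W=V^{-1}$.

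For (ii) $\Rightarrow$ (iii), I would observe that $U^{-1}=I-S$ where $S$ is the unit right-shift (nonzero only on the superdiagonal), and expand
\[
\cK^{-1}=(I-S^\top)\,W^{-1}\,(I-S)=W^{-1}+S^\top W^{-1} S-W^{-1}S-S^\top W^{-1}.
\]
Since $W^{-1}$ is diagonal, $W^{-1}S$ and $S^\top W^{-1}$ populate only the super- and subdiagonal respectively, while $S^\top W^{-1} S$ merely shifts the diagonal of $W^{-1}$ down by one position; in particular $\cK^{-1}$ is tridiagonal. Substituting $W_{jj}^{-1}=1/[\alpha^{j}(1-\alpha)]$ for $j<n$ and $W_{nn}^{-1}=1/\alpha^n$ and collecting the common factor $1/(\alpha-\alpha^2)$ reproduces \eqref{eqn:KTC_inv} entry by entry; the only entry that requires care is the $(n,n)$ corner, which evaluates to $1/\alpha^n+1/[\alpha^{n-1}(1-\alpha)]$ and matches the prescribed $1/\alpha^{n-2}+(1-\alpha)/\alpha^{n-1}$ after scaling.

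Finally, for (iii) $\Rightarrow$ (i): by hypothesis $\cK$ is symmetric positive definite, satisfies the moment constraints \eqref{eqn:moment_contraints_TC} along the main band by definition, and its inverse is tridiagonal (i.e.\ $1$-banded). The bandedness part of Theorem \ref{thm:feasibility_and_bandedness}(ii) characterizes the central extension of $\bSigma_n^{(1)}$ as the \emph{unique} positive definite completion with $1$-banded inverse; therefore $\cK$ must coincide with it, so $\cK$ solves Problem \eqref{probl:MaxEntr}. The main obstacle I anticipate is notational rather than conceptual: one must carry the indices in \eqref{eqn:Xm}--\eqref{eqn:V} through the boundary case $j=n$ cleanly, since this is precisely what produces the exceptional last diagonal entry $\alpha^{n-1}/(1-\alpha)$ in $W$ and the matching corner of \eqref{eqn:KTC_inv}.
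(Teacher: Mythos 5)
Your proposal is correct and follows essentially the same route as the paper: specializing Theorem \ref{thm:forma_chiusa_estensione_centrale}(ii) to $m=1$ to get $L_n^{(1)}$, $V$ and hence the factorization $\cK = UWU^\top$, then reading off the tridiagonal inverse \eqref{eqn:KTC_inv} from it, with all the entry-level computations (including the boundary cases $v_{nn}=1/\alpha^n$ and the $(n,n)$ corner of $\cK^{-1}$) checking out. You are in fact slightly more complete than the paper, which only argues $(i)\Rightarrow(ii)\Rightarrow(iii)$ and leaves the closing implication implicit, whereas you close the cycle via $(iii)\Rightarrow(i)$ using the uniqueness of the positive definite completion with $1$-banded inverse --- a standard strengthening of Theorem \ref{thm:feasibility_and_bandedness}(ii) due to Dym and Gohberg that is consistent with the paper's citations.
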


\begin{proof} 
That $\cK$ admits the factorization \eqref{eqn:fatt_KTC}--\eqref{eqn:Vinv_TCkernel} follows from 
Theorem \ref{thm:forma_chiusa_estensione_centrale} (ii). In fact, by \eqref{eqn:central_extension_factored_form}--\eqref{eqn:V}  
the inverse of the stable spline kernel of order $1$ can be factored as  
\begin{equation}\label{eqn:sol_MaxEnt_per_TCkernel}
\cK^{-1} = L_n^{(1)}V U_n^{(1)}%\cK = U V^{-1} U^\top 
\end{equation}
where %, for the first--order stable spline kernel, 
$L_n^{(1)}$ takes the form 
%\begin{equation}\label{eqn:Lm_TCkernel}
%L_n^{(m)} = \bmat
%1 & 0 & 0 & \dots & 0\\
%-1 & 1 & 0 & \dots & 0\\
%0 & -1 & 1 & \ddots & \vdots\\
%\vdots &  & \ddots &  \ddots & 0\\
%0 & \dots & 0 & -1 & 1
%\emat
%\end{equation}
\begin{equation}\label{eqn:Lm_TCkernel}
L_n^{(1)} = \left(U_n^{(1)}\right)^\top = \bmat
1 & 0 & 0 & \dots & 0\\
-1 & 1 & 0 & \dots & 0\\
0 & \ddots & \ddots & \ddots & \vdots\\
\vdots & \ddots & \ddots &  \ddots & 0\\
0 & \dots & 0 & -1 & 1
\emat
\end{equation}
%$$
%U = \bmat 1&1& \dots &1\\0&1&\dots&1\\0&0& \ddots&\vdots\\0&\dots & 0 & 1\emat\,,
%$$
and
\begin{equation}\label{eqn:V_TCkernel}
V = \frac{1}{\alpha-\alpha^2}{\rm diag}\left\{1, \frac{1}{\alpha},\frac{1}{\alpha^2}, \dots, 
\frac{1}{\alpha^{n-2}},\frac{1-\alpha}{\alpha^{n-1}}\right\}
\end{equation}
Bandedness of $\cK^{-1}$ follows from Theorem \ref{thm:feasibility_and_bandedness} (ii) and expression \eqref{eqn:KTC_inv} 
for $\cK^{-1}$ is an immediate consequence of the factorization \eqref{eqn:fatt_KTC}--\eqref{eqn:Vinv_TCkernel}. 
\end{proof}

%\begin{remark} %Formulas \eqref{eqn:sol_MaxEnt_per_TCkernel}--\eqref{eqn:V_TCkernel} implies that 
%%(where we set $\bar U_n = \left(U_n^{(1)}\right)^{-1}$ and $\bar V = V^{-1}$ for short). 
%%We observe that $U$ is indeed independent from the value of $\alpha$, so that, being $V_n$ diagonal, 
%%the computational cost for 
%We observe that, whatever the value of $\alpha$, all the stable spline kernels of order $1$, share the same factor $U$ \eqref{eqn:Lm_invT_TCkernel}.  
%Moreover, being $W$ available in closed form, once $\alpha$ is known the factorization \eqref{eqn:fatt_KTC} is computationally inexpensive. 
%%costo fattorizzazione e' $O(1)$. fattore L indipendente da alpha. MAtrice diagonale te la calcoli con semplici moltiplicaioni, dipende solo da alpha e hai forma chiusa. 
%This fact can be exploited to relieve the computational burden in computational schemes like those in \citep{ChenLjung2013}. 
%\end{remark}

\begin{remark} %$\cK^{-1}$ is tridiagonal banded 
Let $A=\bmat I_{n-1}\;\mid \;0_{n-1}\emat$.  It is an immediate consequence of \eqref{eqn:KTC_inv} that 
%and such that 
\begin{equation}\label{eqn:colonne_KTC_inv_sommano_1}
\left(A \cK^{-1}\right) \mathds{1}_{n-1} = 0_{n-1}
\end{equation}
%holds, 
i.e. the first $n-1$ columns of $\cK^{-1}$ sum up to zero. 
%Unfortunately, being tridiagonal with the first $n-1$ columns that sum up to zero is only a necessary condition 
%and does not provide a characterization of stable spline kernels of order one. 
\end{remark}

\begin{corollary} The stable spline kernel of order $1$ has determinant   
\begin{equation}\label{eqn:det_K_TC}
{\rm det}\left(\cK\right) = \left[(1-\alpha)^{n-1} \alpha^{\frac12 n(n+1)}\right]\,.
\end{equation}
\end{corollary}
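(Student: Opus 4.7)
The plan is to leverage the factorization $\cK = U W U^\top$ established in the preceding proposition, since this is stated as a corollary and the determinant computation reduces to elementary bookkeeping once the factored form is available.

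First I would note that $U$ is upper triangular with ones on the diagonal, so $\det(U) = \det(U^\top) = 1$, and therefore $\det(\cK) = \det(W)$. The diagonal matrix $W = (\alpha - \alpha^2)\,{\rm diag}\{1,\alpha,\alpha^2,\dots,\alpha^{n-2},\alpha^{n-1}/(1-\alpha)\}$ has $n$ diagonal entries, and $\det(W)$ is the product of these $n$ entries together with $n$ factors of $(\alpha-\alpha^2) = \alpha(1-\alpha)$ pulled out of the scalar prefactor.

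Next I would separate the powers of $(1-\alpha)$ from the powers of $\alpha$. The prefactor contributes $\alpha^n(1-\alpha)^n$, while the last diagonal entry $\alpha^{n-1}/(1-\alpha)$ cancels one $(1-\alpha)$, leaving $(1-\alpha)^{n-1}$ as desired. The powers of $\alpha$ then combine as
\begin{equation*}
n \;+\; \sum_{k=0}^{n-2} k \;+\; (n-1) \;=\; n + \tfrac{(n-2)(n-1)}{2} + (n-1) \;=\; \tfrac{n(n+1)}{2},
\end{equation*}
yielding the claimed formula ${\rm det}(\cK) = (1-\alpha)^{n-1}\alpha^{n(n+1)/2}$.

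There is essentially no obstacle here: the work has already been done in proving the $UWU^\top$ factorization, and all that remains is the arithmetic of summing $0+1+\cdots+(n-2) + n + (n-1) = n(n+1)/2$ while tracking the one surviving power of $(1-\alpha)$ after the cancellation from the last diagonal entry of $W$. An alternative route, should the factorization not be available, would be cofactor expansion along the last row to obtain a recursion $\det(\cK_n) = \alpha(1-\alpha)\cdot\alpha^n\cdot\det(\cK_{n-1})$ with base case $\det(\cK_1) = \alpha$, which solves to the same closed form; but given the corollary's placement immediately after the factorization result, the factored-form argument is the cleaner choice.
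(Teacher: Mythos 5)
Your proof is correct and follows essentially the same route as the paper: both arguments exploit the triangular--diagonal--triangular factorization from the preceding proposition, observe that the unit-triangular factors have determinant one, and reduce the computation to the product of the diagonal entries. The only cosmetic difference is that the paper computes $\det(\cK^{-1})=\det(V)$ from the factorization $\cK^{-1}=L_n^{(1)}VU_n^{(1)}$ and then inverts, whereas you compute $\det(\cK)=\det(W)$ directly from $\cK=UWU^{\top}$; the bookkeeping of exponents is identical.
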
 
\vspace{1mm}
\begin{proof} The first (resp., third) factor in the right hand side of \eqref{eqn:sol_MaxEnt_per_TCkernel} is a lower (resp., upper) triangular matrix with diagonal entries equal to one, and hence the positive definite matrix 
$\cK^{-1}$ and $V$ have the same determinant, i.e. 
$$
\det(\cK^{-1}) = \det(V) = 
%\frac{1}{(\alpha-\alpha^2)} \frac{1}{\alpha(\alpha-\alpha^2)} \frac{1}{\alpha^2(\alpha-\alpha^2)} \cdot \dots \cdot \frac{1}{\alpha^{n-2}(\alpha-\alpha^2)} \cdot \frac{1}{\alpha^n}
\frac{1}{\alpha^n(\alpha-\alpha^2)^{n-1}}\prod_{i=2}^n\frac{1}{\alpha^{i-2}}\,.
$$
The thesis follows immediately by recalling that $\sum_{i=1}^{n-2} i = \frac12 (n-2)(n-1)$\,. 
\end{proof}

\begin{remark} A key point in the evaluation of the stable spline estimator lies in solving the marginal likelihood maximization problem \eqref{eqn:J_N}, 
that is usually nonconvex. 
No matter what solver is used, the tuning of the hyperparameters requires repeated evaluations of the marginal likelihood.  
%for different values of $\eta$.  
Here we observe that, whatever the value of $\alpha$, all the stable spline kernels of order $1$ share the same factor $U$ \eqref{eqn:Lm_invT_TCkernel}.  
Moreover, being $W$ available in closed form, once $\alpha$ is known the factorization \eqref{eqn:fatt_KTC} is computationally inexpensive.  
The same applies to the factorization of $\cK^{-1}$. 
This fact, together with the closed form expression for the determinant of the stable spline kernel in \eqref{eqn:det_K_TC},  
can be exploited both to improve the stability and to reduce the computational burden 
%in QR factorization based matrix-inversion-free algorithm like those in \citep{ChenLjung2013}. 
%in computational schemes like those in \citep{ChenLjung2013}. 
%, as will be detailed in a forthcoming paper. 
%of computational schemes for 
associated with computational schemes for the evaluation of the stable spline estimator like those in %\citep{ChenLjung2013}. 
\citep{CarliChiusoPillonetto2012SYSID,ChenLjung2013}. 
% \citep{CarliChiusoPillonetto2012SYSID}. 
\end{remark}

We conclude this section by highlighting 
an additional property 
of the first--order stable spline kernel 
that %again 
originates from the maximum entropy property 
of Proposition \ref{prop:MaxEntr_for_TCkernel}. 

\section{Conclusions}\label{sec:Conclusions}

Empirical Bayes estimation for system identification problems
has recently become popular, mainly due to the introduction
of a family of prior descriptions (the so--called stable spline kernels) which encode structural properties of dynamical systems such as stability. 
Maximum entropy properties of first--order stable spline kernels have been highlighted in %the literature. 
\citep{PillonettoDeNicolao2011}. %The main contribution of the present paper is to 
In this paper we provide an alternative proof that leads 
to a closed form expression for the inverse of the first order stable spline kernel as well as to a 
new, computationally advantageous factorization. 
Maximum likelihood properties of the stable spline kernel are also highlighted. 
These properties can be exploited both to improve the stability and to relieve the computational complexity associated with the computation of 
stable spline estimators. 
%The
%above mentioned properties can for example be used both
%to improve stability and reduce the computational burden of
%computational schemes for the evaluation of the stable spline
%estimator.
%\addtolength{\textheight}{-12cm}   % This command serves to balance the column lengths
                                  % on the last page of the document manually. It shortens
                                  % the textheight of the last page by a suitable amount.
                                  % This command does not take effect until the next page
                                  % so it should come on the page before the last. Make
                                  % sure that you do not shorten the textheight too much.

%%%%%%%%%%%%%%%%%%%%%%%%%%%%%%%%%%%%%%%%%%%%%%%%%%%%%%%%%%%%%%%%%%%%%%%%%%%%%%%%

%%%%%%%%%%%%%%%%%%%%%%%%%%%%%%%%%%%%%%%%%%%%%%%%%%%%%%%%%%%%%%%%%%%%%%%%%%%%%%%%

%%%%%%%%%%%%%%%%%%%%%%%%%%%%%%%%%%%%%%%%%%%%%%%%%%%%%%%%%%%%%%%%%%%%%%%%%%%%%%%%
%\section*{APPENDIX}
%
%Appendixes should appear before the acknowledgment.
%
%\section*{ACKNOWLEDGMENT}
%
%The preferred spelling of the word ÒacknowledgmentÓ in America is without an ÒeÓ after the ÒgÓ. Avoid the stilted expression, ÒOne of us (R. B. G.) thanks . . .Ó  Instead, try ÒR. B. G. thanksÓ. Put sponsor acknowledgments in the unnumbered footnote on the first page.
%
%
%
%%%%%%%%%%%%%%%%%%%%%%%%%%%%%%%%%%%%%%%%%%%%%%%%%%%%%%%%%%%%%%%%%%%%%%%%%%%%%%%%%
%
%References are important to the reader; therefore, each citation must be complete and correct. If at all possible, references should be commonly available publications.
%

\bibliographystyle{plainnat}
\bibliography{biblio_TC_kernel}

\end{document}